%
%
%
%
\documentclass{amsart}

\newtheorem{theorem}{Theorem}[section]

\theoremstyle{definition}
\newtheorem{definition}[theorem]{Definition}
\newtheorem{example}[theorem]{Example}

\theoremstyle{remark}
\newtheorem{remark}[theorem]{Remark}

\numberwithin{equation}{section}
\def\R{\mathbb{R}}


\begin{document}

\title{Intrinsic representation curves}

\author{Hector Efren Guerrero Mora}
\address{Department of Mathematics, Cauca University, Cauca, Colombia}
\email{heguerrero@unicauca.edu.co}
\thanks{The first author was supported in part by Cauca university project ID 4558.}


\subjclass[2000]{Primary 53A04; Secondary 53A55}
\date{December 5, 2017.}
\dedicatory{This paper is dedicated to my brother.}

\keywords{Differential geometry, algebraic geometry}

\begin{abstract}
The purpose of this article is to find a family of curves parametrized by arc length and that depend on an angular function and an intrinsic fraction function, which is defined as the quotient between torsion and curvature.
We find for this family of curves explicit formulas of curvature, torsion and geodetic curvature, in terms of the angular function and the intrinsic fraction function.
Applications are found for the case of the general helices and slant helices.
\\
\end{abstract}

\maketitle
\section*{Introduction}
\begin{definition}
 Let $\alpha:I\rightarrow \R^3$ be a regular curve in $\R^3$, which is parametrized by arc length $s$ and $\kappa=\kappa(s)$, $\tau=\tau(s)$ the curvature and torsion respectively.\\ The funtion
 \begin{equation*}
 \frac{\tau}{\kappa}(s)=\frac{\tau(s)}{\kappa(s)},
 \end{equation*}is called intrinsic fraction function of the curve $\alpha$
 \end{definition}
 \begin{remark}
  Let $\alpha\in \mathcal{C}$ and suppose that its intrinsic fraction function $\varphi$ is not constant. Consider its tangent vector in spherical coordinates, this is
\begin{equation*}
\textbf{t}(s) =(\sin \phi\cos \theta,\sin \phi\sin \theta,\cos \phi),
\end{equation*}  where $\theta=\theta(s)$  is a differential function of angle in the $xy-$ plane from the positive $x-$axis and anticlockwise and $\phi=\phi(s)$ is a differential function of angle from the positive $z-$ axis. we need to find explicit expressions of curves that have an intrinsic fraction function $\varphi$
\\
Since the general Serret-Frenet equation for a curve in space
\begin{eqnarray*}
\frac{d}{ds}\textbf{t}&=&\kappa\textbf{n}\\
\frac{d}{ds}\textbf{n}&=&-\kappa\textbf{t}+\tau\textbf{b}\\
\frac{d}{ds}\textbf{b}&=&-\tau\textbf{n},
\end{eqnarray*}where $\kappa$ and $\tau$ are the curvature and torsion and $s$ the arc length at the point with tangent, normal and binormal $\textbf{t},\textbf{n}$ and $\textbf{b}$.
\\
Differentiating with respect to $s$ the tangent vector $\textbf{t}(s)$
\begin{equation*}\frac{d\textbf{t}}{ds}=
(\phi'\cos \phi\cos \theta-\theta'\sin \phi\sin \theta, \phi'\cos \phi\sin \theta +\theta'\sin\phi\cos \theta,-\phi'\sin \phi)\end{equation*}
\\ and calculating its norm we obtain the formula for the curvature $\kappa$ in terms of $\phi$ and $\theta$
 \begin{equation*}
 \mid\mid\frac{d\textbf{t}}{ds}\mid \mid =\kappa=\sqrt{(\phi')^2+(\theta')^2\sin^2\phi}.
 \end{equation*}
 Differentiating with respect to $s$ the binormal vector
\begin{equation*}
\textbf{b}=(\frac{-\phi'\sin \theta}{\kappa}-\frac{\theta'\sin 2\phi\cos\theta}{2\kappa},\frac{-\phi'\cos \theta}{\kappa}-\frac{\theta'\sin 2\phi\sin \theta}{2\kappa},\frac{\theta'\sin^2 \phi}{\kappa}),
\end{equation*}
  and from Serret-Frenet equation one can derive the formulas: \begin{eqnarray*}
  &&\frac{d}{ds}<\textbf{b},\textbf{e}_1>=\frac{d}{ds}(\frac{-\phi'\sin \theta}{\kappa}-\frac{\theta'\sin 2\phi\cos\theta}{2\kappa})=\\
  &&-(\frac{\phi'\theta''\sin \phi+2(\phi')^2\theta'\cos \phi+(\theta')^3\sin^2 \phi\cos \phi-\phi''\theta'\sin \phi}{\kappa^2})\frac{(\phi'\cos \phi\cos \theta-\theta'\sin \phi\sin \theta)}{\kappa}\\
  &&\frac{d}{ds}<\textbf{b},\textbf{e}_2>=\frac{d}{ds}(\frac{-\phi'\cos \theta}{\kappa}-\frac{\theta'\sin 2\phi\sin \theta}{2\kappa})=\\
  &&-(\frac{\phi'\theta''\sin\phi+2(\phi')^2\theta'\cos\phi+(\theta')^3\sin^2 \phi\cos \phi-\phi''\theta'\sin \phi}{\kappa^2})\frac{(\phi'\cos\phi\sin \theta+\theta'\sin \phi\cos \theta)}{\kappa}\\
   &&\frac{d}{ds}<\textbf{b},\textbf{e}_3>=\frac{d}{ds}(\frac{\theta'\sin^2 \phi}{\kappa})=\\
  &&(\frac{\phi'\theta''\sin \phi+2(\phi')^2\theta'\cos \phi+(\theta')^3\sin^2 \phi\cos\phi-\phi''\theta'\sin \phi}{\kappa^2})\frac{(\phi'\sin \phi)}{\kappa}\\
  \end{eqnarray*}
  This is
  \begin{equation*}
  \frac{d}{ds}<\textbf{b},\textbf{e}_1>=-\tau\frac{(\phi'\cos \phi\cos \theta-\theta'\sin \phi\sin \theta)}{\kappa}
  \end{equation*}
  \begin{equation*}\frac{d}{ds}<\textbf{b},\textbf{e}_2>=-\tau\frac{(\phi'\cos\phi\sin \theta+\theta'\sin \phi\cos \theta)}{\kappa}
  \end{equation*}
  \begin{equation}\label{Formula 1}\frac{d}{ds}<\textbf{b},\textbf{e}_3>=\tau\frac{(\phi'\sin \phi)}{\kappa}
  \end{equation}
 One obtains the fundamental equations:
\begin{eqnarray*}
\frac{d}{ds}(\frac{-\phi'\sin \theta}{\kappa}-\frac{\theta'\sin 2\phi\cos\theta}{2\kappa})&=&-\frac{\tau}{\kappa}(\phi'\cos \phi\cos \theta-\theta'\sin \phi\sin \theta)\\
\frac{d}{ds}(\frac{-\phi'\cos \theta}{\kappa}-\frac{\theta'\sin 2\phi\sin \theta}{2\kappa})&=&-\frac{\tau}{\kappa}(\phi'\cos\phi\sin \theta+\theta'\sin \phi\cos \theta)\\
\frac{d}{ds}(\frac{\theta'\sin^2 \phi}{\kappa})&=&
  \frac{\tau}{\kappa}(\phi'\sin \phi),
\end{eqnarray*}
Putting $\frac{\tau}{\kappa}(s)=\varphi(s)=\varphi$. Then by immediate integration takes the form
\begin{equation}\label{formula fundamental}
\frac{\theta'\sin^2 \phi}{\kappa}=\frac{\theta'\sin^2 \phi}{\sqrt{(\phi')^2+(\theta')^2\sin^2\phi}}=\int\varphi\phi'\sin \phi ds.
\end{equation}
It follows that
\begin{equation*}
\frac{(\theta')^2\sin ^4 \phi}{(\phi')^2+(\theta')^2\sin^2\phi}=(\int\varphi\phi'\sin \phi ds)^2.
\end{equation*}
Consider the case where $\cos \phi$ is different from a constant, that is, $\frac{d}{ds}\cos \phi\neq 0$, thus $\phi'\neq 0$ and
\begin{equation*}
(\theta')^2=\frac{(\phi')^2(\int \varphi\phi'\sin \phi ds)^2}{\sin^2 \phi(\sin^2\phi-(\int \varphi\phi'\sin \phi ds)^2)}.
\end{equation*}
the above motivates us to give the following definition
 \end{remark}
\begin{definition}
Let $\varphi$ be a defined differentiable function of an open interval $I$ with real values,\begin{equation*} \mathcal{D}_{\varphi}=\{ \phi \mid \phi :I_{\mbox{\tiny$\phi$}}\subset I\rightarrow \R \  \text{is differentiable,}\  \ \ 1>\cos^2\phi+(\int\varphi(\cos\phi)' ds)^2\ \text{and}\   (\cos\phi)' <\ 0 \}.\end{equation*}Then the curves defined as
\begin{multline*}
   \rho_{\varphi}(\phi)(s)=(\int{\sin \phi \cos (\int{\frac{\phi'\csc\phi(\int{\varphi\phi'\sin\phi ds)}}{\sqrt{\sin^2\phi-(\int\varphi\phi'\sin\phi ds)^2}}}}ds)ds ,\\ \int{\sin \phi \sin (\int{\frac{\phi'\csc\phi(\int{\varphi\phi'\sin\phi ds)}}{\sqrt{\sin^2\phi-(\int\varphi\phi'\sin\phi ds)^2}}}}ds)ds ,\int{\cos \phi ds}),\ \text{for some} \ \phi\in \mathcal{D}_{\varphi}
 \end{multline*}
 are called intrinsic representation curves.
\end{definition}
\begin{example}
 The set  $\mathcal{D}_{\varphi}$ is different from empty, for each differentiable function $\varphi:I=(a,b)\rightarrow R$.
In effect, if function $\varphi(s)=\varphi_0$ is a constant, then define $\phi$ on $I=(a,b)$ as
\begin{equation*}
\phi(s)=\arccos{(\frac{\cos{(\frac{s-a}{b-a})\pi}}{1+\varphi^2_0})},
\end{equation*}It is clear that
\begin{equation*}
\frac{d}{ds}\cos\phi(s)=\frac{-\pi\sin{(\frac{s-a}{b-a})\pi}}{(b-a)(1+\varphi^2_0)}<0,
\end{equation*} and
\begin{eqnarray*}
1-\cos^2\phi-(\int\varphi_0(\cos\phi)' ds)^2&=&1-(1+\varphi^2_0)\cos^2\phi\\
&=&1-\frac{1}{1+\varphi^2_0}\cos^2{(\frac{s-a}{b-a})\pi}>0,
\end{eqnarray*}
thus $\mathcal{D}_{\varphi}\neq \emptyset$ and since
\begin{eqnarray*}
&&\int{\frac{\phi'\csc\phi(\int{\varphi\phi'\sin\phi ds)}}{\sqrt{\sin^2\phi-(\int\varphi\phi'\sin\phi ds)^2}}}ds=\\
&&\frac{-\varphi_0\pi\sqrt{1+\varphi^2_0}}{(b-a)\sqrt{2}}
\int{\frac{\sin{\pi(\frac{s-a}{b-a})}\cos{\pi(\frac{s-a}{b-a})}}{((1+\varphi^2_0)^2-\cos^2{\pi(\frac{s-a}{b-a})})\sqrt{1+2\varphi^2_0-\cos{2\pi(\frac{s-a}{b-a})}}}}=\\
&&-\arctan{\frac{\sqrt{1+2\varphi^2_0-\cos{2\pi(\frac{s-a}{b-a})}}}{\varphi_0\sqrt{2}\sqrt{1+\varphi^2_0}}}.
\end{eqnarray*}
And a direct calculation shows that the intrinsic representation curves are
\begin{multline*}
   \rho_{\varphi}(\phi)(s)=(\int{\sin \phi \cos (\int{\frac{\phi'\csc\phi(\int{\varphi\phi'\sin\phi ds)}}{\sqrt{\sin^2\phi-(\int\varphi\phi'\sin\phi ds)^2}}}}ds)ds ,\\ \int{\sin \phi \sin (\int{\frac{\phi'\csc\phi(\int{\varphi\phi'\sin\phi ds)}}{\sqrt{\sin^2\phi-(\int\varphi\phi'\sin\phi ds)^2}}}}ds)ds ,\int{\cos \phi ds})\\=(\frac{\varphi_0}{\sqrt{1+\varphi^2_0}}s,-\frac{1}{\sqrt{2}(1+\varphi^2_0)}\int{\sqrt{1+2\varphi^2_0-\cos{2\pi(\frac{s-a}{b-a})}}}ds,
   \frac{(b-a)\sin{\pi(\frac{s-a}{b-a})}}{(1+\varphi^2_0)\pi}).
 \end{multline*}
 Note that these curves are general helices,
using the formulas of curvature \begin{equation*}\kappa=\frac{\mid\mid \beta'\wedge \beta'' \mid\mid}{\mid \mid \beta' \mid \mid^3}\end{equation*} and torsion \begin{equation*}\tau=\frac{\beta'\wedge \beta''\cdot \beta'''}{\mid\mid\beta'\wedge\beta''\mid \mid^2},\end{equation*}
we obtain that
\begin{eqnarray*}
\kappa&=&\frac{\pi\sin{\frac{\pi(s-a)}{b-a}}}{(b-a)\sqrt{1+\varphi_{0}^2}\sqrt{(\varphi_{0}^2+\sin^2{\frac{\pi(s-a)}{b-a}})}}\\
\tau&=&\frac{\sqrt{2}\varphi_0\pi\sin{\frac{\pi(s-a)}{b-a}}}{(b-a)\sqrt{1+\varphi^2_0}\sqrt{1+2\varphi^2_0-\cos{\frac{2\pi(s-a)}{b-a}}}}\\
&=&\frac{\varphi_0\pi\sin{\frac{\pi(s-a)}{b-a}}}{(b-a)\sqrt{1+\varphi_{0}^2}\sqrt{(\varphi_{0}^2+\sin^2{\frac{\pi(s-a)}{b-a}})}},
\end{eqnarray*}thus $\frac{\tau}{\kappa}=\varphi_0.$
\end{example}
Now, the theorem of Hector's intrinsic representation curves will be demonstrated.
\begin{theorem}\label{representacion}
Let $\rho_{\varphi}(\phi)$ be a intrinsic representation curve, for some $\phi\in \mathcal{D}_{\varphi}$. Then
\begin{enumerate}
\item The curvature of  $\rho_{\varphi}(\phi)$ is
\begin{equation*}
 \kappa_{\rho}=\frac{ \phi' \sin  \phi}{\sqrt{\sin^2\phi-(\int\varphi\phi'\sin\phi ds)^2}}
\end{equation*}
\item The torsion is
\begin{equation*}
\tau_{\rho}=\frac{\varphi \phi' \sin  \phi}{\sqrt{\sin^2\phi-(\int\varphi\phi'\sin\phi ds)^2}}.
\end{equation*}
\item The intrinsic fraction function is
\begin{equation*}
\frac{\tau_{\rho}}{\kappa_{\rho}}(s)=\varphi(s),
\end{equation*}for all $s\in I_{\mbox{\tiny$\phi$}}$.
\item The geodesic curvature of the normal vector $n$ of the curve $\rho_{\varphi}(\phi_i)$ in $S^2$ is given by
\begin{equation*}
\sigma_{\rho}=\frac{\varphi'\sqrt{\sin^2\phi-(\int\varphi\phi'\sin\phi ds)^2}}{(1+\varphi^2)^{3/2}\phi' \sin  \phi}
\end{equation*}
\end{enumerate}
\end{theorem}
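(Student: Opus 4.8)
The plan is to compute everything directly from the explicit parametrization of $\rho_{\varphi}(\phi)$ using the classical Frenet formulas for a general (not arc-length) parametrized space curve. To streamline the algebra, I would first introduce abbreviations: set $J(s)=\int\varphi\phi'\sin\phi\,ds$ for the inner integral that recurs throughout, and let $\psi(s)=\int \dfrac{\phi'\csc\phi\,J}{\sqrt{\sin^2\phi-J^2}}\,ds$ denote the angular function appearing in the first two coordinates, so that $\rho_{\varphi}(\phi)=\bigl(\int\sin\phi\cos\psi\,ds,\ \int\sin\phi\sin\psi\,ds,\ \int\cos\phi\,ds\bigr)$. With this notation I can read off $\beta'=\rho_{\varphi}(\phi)'=(\sin\phi\cos\psi,\ \sin\phi\sin\psi,\ \cos\phi)$ immediately, and the key observation is that $\|\beta'\|=1$: the curve is parametrized by arc length by construction, since $\sin^2\phi\cos^2\psi+\sin^2\phi\sin^2\psi+\cos^2\phi=1$. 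This is exactly what makes $\beta'=\mathbf{t}$ equal to the unit tangent written in the spherical coordinates of the opening Remark.

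Next I would verify that $\psi$ plays the role of the azimuthal angle $\theta$ and that the fundamental relation \eqref{formula fundamental} holds by design. By definition $\psi'=\dfrac{\phi'\csc\phi\,J}{\sqrt{\sin^2\phi-J^2}}$, and a short computation gives $\psi'\sin^2\phi=\dfrac{\phi'\sin\phi\,J}{\sqrt{\sin^2\phi-J^2}}$; matching this against the curvature formula $\kappa=\sqrt{(\phi')^2+(\psi')^2\sin^2\phi}$ from the Remark should recover $\psi'\sin^2\phi/\kappa=J$, i.e. precisely \eqref{formula fundamental}. Once I confirm $\kappa_\rho=\|\mathbf t'\|=\|\beta''\|$ reduces to $\sqrt{(\phi')^2+(\psi')^2\sin^2\phi}$, substituting $(\psi')^2\sin^2\phi=\dfrac{(\phi')^2 J^2}{\sin^2\phi-J^2}$ and combining over the common denominator $\sin^2\phi-J^2$ yields part (1): $\kappa_\rho=\dfrac{\phi'\sin\phi}{\sqrt{\sin^2\phi-J^2}}$. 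Part (3), $\tau_\rho/\kappa_\rho=\varphi$, then follows the instant part (2) is established, so the real content is parts (2) and (4).

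For the torsion in part (2), I would use $\tau=\dfrac{(\beta'\wedge\beta'')\cdot\beta'''}{\|\beta'\wedge\beta''\|^2}$ with $\|\beta'\wedge\beta''\|=\kappa_\rho$ (since $\|\beta'\|=1$). The cleaner route, though, is to exploit Formula~\eqref{Formula 1} already derived in the Remark, namely $\frac{d}{ds}\langle\mathbf b,\mathbf e_3\rangle=\tau\,\phi'\sin\phi/\kappa$, which expresses $\tau$ directly. Since by construction the intrinsic fraction function of $\rho_\varphi(\phi)$ equals $\varphi$ — this is how $\psi$ was engineered via \eqref{formula fundamental} — I expect $\tau_\rho=\varphi\,\kappa_\rho$ to drop out, giving part (2). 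Establishing this identification honestly (rather than circularly) is where I expect the main obstacle to lie: I must show that the $\psi$ defined in the Definition actually reproduces the torsion-to-curvature ratio $\varphi$, which requires carefully differentiating the binormal $\mathbf b=\mathbf t\wedge\mathbf n$ and checking that the quantity $\frac{\phi'\theta''\sin\phi+2(\phi')^2\theta'\cos\phi+(\theta')^3\sin^2\phi\cos\phi-\phi''\theta'\sin\phi}{\kappa^2}$ from the Remark equals $\varphi\,\kappa$ when $\theta=\psi$. This is a genuine (if mechanical) differential-equation verification rather than a tautology.

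Finally, for the geodesic curvature $\sigma_\rho$ in part (4), the normal vector $\mathbf n$ of $\rho_\varphi(\phi)$ traces a curve on the unit sphere $S^2$, and its geodesic curvature is computed by the standard formula $\sigma=\dfrac{(\mathbf n\wedge\mathbf n')\cdot\mathbf n''}{\|\mathbf n'\|^3}$ evaluated on $S^2$ (equivalently, the signed curvature of the spherical image of $\mathbf n$ with respect to arc length on the sphere). Here the classical identity is useful: for a unit-speed space curve, $\mathbf n'=-\kappa\mathbf t+\tau\mathbf b$, so $\|\mathbf n'\|=\sqrt{\kappa^2+\tau^2}=\kappa_\rho\sqrt{1+\varphi^2}$ using parts (1)–(3). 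I would then compute $\mathbf n''$ from the Frenet equations, form the scalar triple product, and simplify; the factor $\varphi'$ in the numerator signals that differentiating $\varphi$ (equivalently the ratio $\tau/\kappa$) is what drives the geodesic curvature, consistent with the known fact that $\sigma$ measures the rate of change of $\arctan(\tau/\kappa)$ along the spherical image. Collecting the $\sqrt{\sin^2\phi-J^2}$ and $(1+\varphi^2)^{3/2}$ factors should reproduce the stated formula. The bookkeeping in this last part, tracking the numerous $\phi'$, $\psi'$, $\sin\phi$ terms through two differentiations, is the most calculation-heavy step, but it is routine once parts (1)–(3) are in hand.
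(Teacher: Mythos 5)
Your proposal is correct, and its overall strategy --- verify that the curve is unit speed, then compute the Frenet invariants directly from the parametrization --- is the same as the paper's; the differences are in execution, and they mostly favor you. For the curvature the two routes coincide: since $\|\beta'\|=1$, the paper's formula $\kappa=\|\beta'\wedge\beta''\|/\|\beta'\|^3$ reduces to your $\|\beta''\|=\sqrt{(\phi')^2+(\psi')^2\sin^2\phi}$, and substituting $(\psi')^2\sin^2\phi=(\phi')^2J^2/(\sin^2\phi-J^2)$ gives part (1). For the torsion the paper invokes the triple-product formula $\tau=(\beta'\wedge\beta''\cdot\beta''')/\|\beta'\wedge\beta''\|^2$ and asserts the answer ``by direct calculation,'' which means grinding out $\beta'''$; you instead close the loop through the Remark's own machinery: by construction $\psi'\sin^2\phi/\kappa_\rho=J$, so differentiating and using \eqref{Formula 1} (valid for any unit-speed curve whose tangent is written in spherical coordinates) gives $\tau_\rho\,\phi'\sin\phi/\kappa_\rho=J'=\varphi\,\phi'\sin\phi$, hence $\tau_\rho=\varphi\kappa_\rho$ because $\phi'\sin\phi>0$ --- this positivity, forced by $(\cos\phi)'<0$ in the definition of $\mathcal{D}_\varphi$, is also what lets one drop the absolute value in $\kappa_\rho$. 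You correctly flag this as the one step that must be done non-circularly, and your identification of where the real content lies is exactly right. For part (4) the paper simply cites the formula $\sigma=(\tau/\kappa)'/\bigl(\kappa(1+(\tau/\kappa)^2)^{3/2}\bigr)$ (the Izumiya--Takeuchi invariant of reference [4]) and substitutes parts (1)--(3); your plan derives that formula from the Frenet equations via $\sigma=(\mathbf{n}\wedge\mathbf{n}')\cdot\mathbf{n}''/\|\mathbf{n}'\|^3$, which indeed yields $(\kappa\tau'-\tau\kappa')/(\kappa^2+\tau^2)^{3/2}$ and hence the cited expression, so your argument proves strictly more than the paper records. The only detail worth adding to your write-up is that $\sin^2\phi-J^2>0$, needed to keep every square root real and nonvanishing, is precisely the condition $1>\cos^2\phi+(\int\varphi(\cos\phi)'\,ds)^2$ imposed in the definition of $\mathcal{D}_\varphi$.
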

\begin{proof}
It is clear that $\rho_{\varphi}(\phi)\in \mathcal{C}= \{\alpha \mid \alpha:I\rightarrow \R^3, \text{is a curve parametrized by arc length $s$}\}$ and by a direct calculation, using the formulas of curvature \begin{equation*}\kappa=\frac{\mid\mid \beta'\wedge \beta'' \mid\mid}{\mid \mid \beta' \mid \mid^3}\end{equation*} and torsion \begin{equation*}\tau=\frac{\beta'\wedge \beta''\cdot \beta'''}{\mid\mid\beta'\wedge\beta''\mid \mid^2},\end{equation*} we obtain that
 \begin{eqnarray*}
 \kappa&=&\frac{\mid \phi' \sin  \phi\mid}{\sqrt{\sin^2\phi-(\int\varphi\phi'\sin\phi ds)^2}}=\frac{ \phi' \sin  \phi}{\sqrt{\sin^2\phi-(\int\varphi\phi'\sin\phi ds)^2}}\\
 \tau&=&\frac{\varphi \phi' \sin  \phi}{\sqrt{\sin^2\phi-(\int\varphi\phi'\sin\phi ds)^2}}.
 \end{eqnarray*} Therefore the intrinsic fraction function of curve  $\rho_{\varphi}(\phi)$ is given by
 \begin{equation*}
 \frac{\tau_{\rho}}{\kappa_{\rho}}=\varphi.
 \end{equation*}
 By a straightforward calculation, the geodesic curvature of the normal vector $n$ of the curve $\rho_{\varphi}(\phi_i)$ in $S^2$ is given by
 \begin{eqnarray*}
 \sigma_{\rho}&=&\frac{(\frac{\tau_{\rho}}{\kappa_{\rho}})'}{(1+(\frac{\tau_{\rho}}{\kappa_{\rho}})^2)^{3/2}\kappa_{\rho}}\\
 &=&\frac{\varphi'\sqrt{\sin^2\phi-(\int\varphi\phi'\sin\phi ds)^2}}{(1+\varphi^2)^{3/2}\phi' \sin  \phi}.
 \end{eqnarray*}
\end{proof}
We now show some applications of the theorem of Hector's intrinsic representation curves
\begin{theorem}
If the curve $\alpha:I=(a,b)\rightarrow \R^3$, parameterized by length of arc $s$, is a general helix, then there exists an interval $J\subset I$ such that the restriction from $\alpha$ to $J$ coincides with curve
\begin{equation*}
\beta(s)=(\frac{1}{\sqrt{(1+\varphi^2_0}}\int{\sqrt{1-(1+\varphi_0^2)\cos^2\xi}}ds,\frac{\mid\varphi_0 \mid s}{\sqrt{1+\varphi_0^2}},\int{\cos \xi}ds),
\end{equation*}for all $s\in J$, where $\varphi_0\neq 0$ is a constant and\\ \begin{equation*}\xi\in \mathcal{D}_{\varphi_0}=\{ \phi \mid \phi:I_{\mbox{\tiny$\phi$}}\subset I\rightarrow \R\  \text{is differentiable,}\ \ 1>\cos^2\phi+(\int\varphi_{0}(\cos\phi)' ds)^2\ \text{and}\   (\cos\phi)' <\ 0 \},\end{equation*} or differ from a rigid movement.
\\if conversely curve $\beta$ is defined as above, then $\beta$ is a general helix.
\end{theorem}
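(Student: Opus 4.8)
The plan is to prove the two implications separately, using the classical fact that a curve is a general helix exactly when its unit tangent makes a constant angle with a fixed direction, together with the explicit curvature and torsion formulas of Theorem~\ref{representacion}.

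\emph{The converse is the easy half.} Given $\beta$ as in the statement, I would first check that it is parametrised by arc length: a one-line computation gives
\[
\|\beta'\|^2=\frac{1-(1+\varphi_0^2)\cos^2\xi}{1+\varphi_0^2}+\frac{\varphi_0^2}{1+\varphi_0^2}+\cos^2\xi=1 .
\]
Since the second coordinate of $\beta'$ is the constant $|\varphi_0|/\sqrt{1+\varphi_0^2}$, the tangent $\beta'$ makes a constant angle with the fixed direction $\mathbf{e}_2$, which is precisely the definition of a general helix. (Equivalently, one recognises $\beta$ as an intrinsic representation curve for the constant function $\varphi_0$ and invokes Theorem~\ref{representacion}(3) to get $\tau_\beta/\kappa_\beta=\varphi_0$ constant.)

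\emph{For the direct implication}, I would start from the defining property of the general helix $\alpha$: there is a fixed unit vector $\mathbf{u}$ with $\langle\mathbf{t},\mathbf{u}\rangle=\cos\psi$ constant. Differentiating twice and using Serret--Frenet gives first $\langle\mathbf{n},\mathbf{u}\rangle=0$, hence $\mathbf{u}=\cos\psi\,\mathbf{t}+\sin\psi\,\mathbf{b}$, and then $\kappa\cos\psi-\tau\sin\psi=0$, so $\tau/\kappa=\cot\psi=:\varphi_0$ is a nonzero constant. After a rigid motion I may assume $\mathbf{u}=\mathbf{e}_2$, and by choosing the sign of $\mathbf{u}$ that $\langle\mathbf{t},\mathbf{e}_2\rangle=|\varphi_0|/\sqrt{1+\varphi_0^2}$. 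I would then define $\xi$ by $\cos\xi:=\langle\mathbf{t},\mathbf{e}_3\rangle$ on a subinterval $J\subset I$ on which this quantity is strictly decreasing and on which the first component $t_1$ of $\mathbf{t}=(t_1,t_2,t_3)$ does not vanish. From $\|\mathbf{t}\|=1$ and $t_2=|\varphi_0|/\sqrt{1+\varphi_0^2}$ one obtains $t_1^2=\bigl(1-(1+\varphi_0^2)\cos^2\xi\bigr)/(1+\varphi_0^2)$; fixing the orientation so that $t_1>0$, the vector $\mathbf{t}$ then agrees componentwise with $\beta'$, and integrating yields $\alpha=\beta$ up to a translation, i.e. $\alpha|_J$ coincides with $\beta$ up to a rigid motion. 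The same choice of $J$ forces $(\cos\xi)'<0$ and, because $t_1\neq 0$, the inequality $1>(1+\varphi_0^2)\cos^2\xi=\cos^2\xi+\bigl(\int\varphi_0(\cos\xi)'\,ds\bigr)^2$, so that $\xi\in\mathcal{D}_{\varphi_0}$, as required.

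The routine parts (the norm computation and the two differentiations in the Lancret argument) are immediate. \emph{The main obstacle is the bookkeeping} around the choice of the subinterval $J$ and the rigid motion: one must arrange at the same time that $\cos\xi=\langle\mathbf{t},\mathbf{e}_3\rangle$ be strictly monotone, so that $\xi$ is a genuine differentiable function; that $t_1$ keep a constant sign, so that the branch $t_1>0$ is consistent throughout $J$; and that the integration constant hidden in $\int\varphi_0(\cos\xi)'\,ds$ be normalised to zero, so as to match the closed form of $\beta$. Only after these normalisations does the identity $\mathbf{t}=\beta'$ hold exactly, which is what produces the ``up to a rigid movement'' clause of the statement.
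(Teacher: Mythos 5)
You take a genuinely different route from the paper's. The paper never invokes the axis characterization: it gets $\tau_\alpha/\kappa_\alpha=\varphi_0$ from Lancret, solves the curvature equation of Theorem~\ref{representacion} for $\xi$ (via $\int\kappa_\alpha\,ds=-\arcsin(\sqrt{1+\varphi_0^2}\cos\xi)/\sqrt{1+\varphi_0^2}$), concludes that $\rho_{\varphi_0}(\xi)$ and $\alpha$ share curvature and torsion and hence coincide up to a rigid motion by the fundamental theorem of curves, and finally evaluates the nested integrals defining $\rho_{\varphi_0}(\xi)$ (the $\arctan$ manipulations) to reach the closed form $\beta$. You instead reconstruct the unit tangent directly from the constant-angle property: once the axis is $\mathbf{e}_2$ and $t_2=|\varphi_0|/\sqrt{1+\varphi_0^2}$, setting $\cos\xi:=t_3$ forces $\mathbf{t}=\beta'$ and integration gives the congruence; your converse (unit speed plus constant second component of $\beta'$) is likewise shorter than the paper's computation of $\kappa_\beta,\tau_\beta$ followed by Lancret. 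Your route is more elementary — no uniqueness theorem, no antiderivative gymnastics — while the paper's buys the connection to its own machinery, exhibiting the helix as an intrinsic representation curve with $\xi$ explicit in terms of $\int\kappa_\alpha$.

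However, the step you dismiss as bookkeeping hides the one real difficulty, and it is not fully repairable because it is a defect of the theorem itself. With $t_2$ normalized to $+|\varphi_0|/\sqrt{1+\varphi_0^2}$, the tangent indicatrix of $\alpha$ runs along its latitude circle about $\mathbf{e}_2$ in a direction whose sign is that of $\tau_\alpha$, i.e.\ of $\varphi_0$, and proper rigid motions cannot change it: rotations about $\mathbf{e}_2$ only shift the phase, and the rotation by $\pi$ about $\mathbf{e}_2$ flips $t_1$ and $t_3'$ simultaneously. In adapted coordinates $t_1=\sin\psi\sin u$, $t_3=\sin\psi\cos u$, so $t_3'=-\sin\psi(\sin u)u'$, and your two requirements $t_1>0$ and $(\cos\xi)'=t_3'<0$ are compatible only for one sign of $u'$, i.e.\ only for $\varphi_0>0$. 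This is forced: $\beta$ has $\tau_\beta/\kappa_\beta=|\varphi_0|>0$ (that is where the absolute value in the statement comes from, since $(\cos\xi)'<0$ gives $\xi'\sin\xi>0$), whereas $\tau_\alpha/\kappa_\alpha=\varphi_0$ is invariant under proper motions; so for $\varphi_0<0$ no choice of $J$ and proper rigid motion can work, and a reflection is unavoidable. The paper glosses over exactly the same point — its claim that the torsion of $\rho_{\varphi_0}(\xi)$ equals $\tau_\alpha$ contradicts its own final formula, whose torsion ratio is $|\varphi_0|$. So either restrict your argument to $\varphi_0>0$ (then it is complete once you add that a rotation about $\mathbf{e}_2$ places the phase so that $\sin u>0$ near a chosen $s_0$, giving $J$ by continuity), or state explicitly that ``rigid movement'' includes orientation-reversing isometries; as written, ``fixing the orientation so that $t_1>0$'' is not a move available independently of the sign of $(\cos\xi)'$.
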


\begin{proof}
Suppose that the curve $\alpha:I\rightarrow \R^3$ parameterized by arc length $s$ is a general helix, and $\kappa_{\alpha}=\kappa_{\alpha}(s)$ is its curvature function and $\tau_{\alpha}=\tau_{\alpha}(s)$ is its torsion function. Then by Lancret's theorem we know that $\frac{\tau_{\alpha}}{\kappa_{\alpha}}(s)=\varphi_0$ is a constant.\\
By Theorem \ref{representacion}  we can find $\phi$ such that the curvature of $\rho_{\varphi_0}(\phi)$ coincides with the curvature of $\alpha$, in effect, we can write
\begin{equation*}
\kappa_{\alpha}=\kappa_{\rho}=\frac{ \phi' \sin  \phi}{\sqrt{\sin^2\phi-(\int\varphi_0\phi'\sin\phi ds)^2}}.
\end{equation*}This implies
\begin{equation*}
\int{\kappa_{\alpha}}ds=-\frac{\arcsin{(\sqrt{1+\varphi_0^2}\cos \phi)}}{\sqrt{1+\varphi^2_0}},
\end{equation*}
Therefore, we can define
\begin{equation*}
\xi(s)=\arccos{(\frac{\sin{(\sqrt{1+\varphi^2_0}\int^s_a{\kappa_{\alpha}}ds})}{\sqrt{1+\varphi^2_0}})},
\end{equation*}in the interval $J=I_{\mbox{\tiny$\xi$}}=(a,i)\subset I$, where $\int^s_a{\kappa_{\alpha}}ds<\frac{\pi}{2\sqrt{1+\varphi^2_0}}$.
\\Then we have $\xi\in\mathcal{D}_{\varphi_0}$, the curvature of $\rho_{\varphi_0}(\xi)$ is $\kappa_{\alpha}=\kappa_{\alpha}(s)$ and since the intrinsic fraction function of $\rho_{\varphi_0}(\xi)$ is $\frac{\tau_{\rho}}{\kappa_{\rho}}=\varphi_0=\frac{\tau_{\alpha}}{\kappa_{\alpha}}$, we conclude that the torsion of  $\rho_{\varphi_0}(\xi)$ is equal to that of the curve $\alpha$.\\
Now, note that the intrinsic representation curve $\rho_{\varphi_0}(\xi)$ is given by
\begin{multline*}
   \rho_{\varphi_0}(\xi)(s)=(\int{\sin \xi \cos (\int{\frac{\xi'\csc\xi(\int{\varphi_0\xi'\sin\xi ds)}}{\sqrt{\sin^2\xi-(\int\varphi_0\xi'\sin\xi ds)^2}}}}ds)ds ,\\ \int{\sin \xi \sin (\int{\frac{\xi'\csc\xi(\int{\varphi_0\xi'\sin\xi ds)}}{\sqrt{\sin^2\xi-(\int\varphi_0\xi'\sin\xi ds)^2}}}}ds)ds ,\int{\cos \xi ds}),\ \text{where} \ \xi\in \mathcal{D}_{\varphi_0}.
 \end{multline*}
Note that
\begin{eqnarray*}
\int{\frac{\xi'\csc\xi(\int{\varphi_0\xi'\sin\xi ds)}}{\sqrt{\sin^2\xi-(\int\varphi_0\xi'\sin\xi ds)^2}}}ds&=&-\int{\frac{\xi'\cos\xi}{\sin\xi\sqrt{-1+\frac{(1+\varphi_0^2)}{\varphi_0^2}\sin^2\xi
}}}ds\\&=&\arctan(\frac{1}{\sqrt{-1+\frac{(1+\varphi_0^2)}{\varphi_0^2}\sin^2\xi }}),
\end{eqnarray*}
Therefore, \begin{eqnarray*}
\cos (\int{\frac{\xi'\csc\xi(\int{\varphi_0\xi'\sin\xi ds)}}{\sqrt{\sin^2\xi-(\int\varphi_0\xi'\sin\xi ds)^2}}}ds)&=&\cos(\arctan(\frac{1}{\sqrt{-1+\frac{(1+\varphi_0^2)}{\varphi_0^2}\sin^2\xi }}))\\
&=&\frac{\sqrt{-(\frac{\varphi_0^2}{1+\varphi_0^2})+\sin^2\xi }}{\sin\xi}\\
&=&\frac{\sqrt{1-(1+\varphi_0^2)\cos^2\xi}}{\sqrt{(1+\varphi^2_0}\sin\xi}\\
\sin (\int{\frac{\xi'\csc\xi(\int{\varphi_0\xi'\sin\xi ds)}}{\sqrt{\sin^2\xi-(\int\varphi_0\xi'\sin\xi ds)^2}}}ds)&=&\sin(\arctan(\frac{1}{\sqrt{-1+\frac{(1+\varphi_0^2)}{\varphi_0^2}\sin^2\xi}}))\\
&=&\frac{\frac{\mid\varphi_0\mid}{\sqrt{1+\varphi_0^2}}}{\sin\xi}.
\end{eqnarray*}
\\
Consequently the intrinsic representation curve is given by
\begin{eqnarray*}
\rho_{\varphi_0}(\xi)(s)&=&(\int{\frac{\sqrt{1-(1+\varphi_0^2)\cos^2\xi}}{\sqrt{(1+\varphi^2_0}}}ds,\int{\frac{\mid\varphi_0 \mid}{\sqrt{1+\varphi_0^2}}}ds,\int{\cos \xi}ds)\\
&=&(\frac{1}{\sqrt{(1+\varphi^2_0}}\int{\sqrt{1-(1+\varphi_0^2)\cos^2\xi}}ds,\frac{\mid\varphi_0 \mid s}{\sqrt{1+\varphi_0^2}},\int{\cos \xi}ds).
\end{eqnarray*}
 Therefore  $\rho_{\varphi_0}(\xi)=\alpha $, in the interval $J$, except for a rigid movement.
 \\Reciprocally, suppose that
  \begin{equation*}
\alpha(s)=(\frac{1}{\sqrt{(1+\varphi^2_0}}\int{\sqrt{1-(1+\varphi_0^2)\cos^2\xi}}ds,\frac{\mid\varphi_0 \mid s}{\sqrt{1+\varphi_0^2}},\int{\cos \xi}ds),
\end{equation*}for all $s\in I_{\mbox{\tiny$\xi$}}$, where $\varphi_0\neq 0$ is a constant and\\ \begin{equation*}\xi\in \mathcal{D}_{\varphi_0}=\{ \phi \mid \phi:I_{\mbox{\tiny$\phi$}}\subset I\rightarrow \R\  \text{is differentiable,}\ \ 1>\cos^2\phi+(\int\varphi_0(\cos\phi)' ds)^2\ \text{and}\   (\cos\phi)' <\ 0 \},\end{equation*}
Note that: This curve is parameterized by arc length $s$ and the curvature function $\kappa=\kappa(s)$ is given by
\begin{equation}\label{curvatura de la helice}
\kappa=\frac{\mid\mid \alpha'\wedge \alpha'' \mid\mid}{\mid \mid \alpha' \mid \mid^3}=\frac{ \xi' \sin  \xi}{\sqrt{1-(1+\varphi_0^2)\cos^2\xi}}
 \end{equation}
 and the torsion function $\tau=\tau(s)$ is
 \begin{equation*}
 \tau=\frac{\alpha'\wedge \alpha''\cdot \alpha'''}{\mid\mid\alpha'\wedge\alpha''\mid \mid^2}=\frac{\mid\varphi_0 \mid \xi' \sin  \xi}{\sqrt{1-(1+\varphi_0^2)\cos^2\xi}}.
 \end{equation*} Therefore the function intrinsic fraction is $\frac{\tau}{\kappa}(s)=\mid\varphi_0 \mid$, that is, the curve $\alpha$ is a general helix.
\end{proof}
\begin{remark}
The expression
\begin{equation*}
\alpha(s)=(\frac{1}{\sqrt{(1+\varphi^2_0}}\int{\sqrt{1-(1+\varphi_0^2)\cos^2\xi}}ds,\frac{\mid\varphi_0 \mid s}{\sqrt{1+\varphi_0^2}},\int{\cos \xi}ds),
\end{equation*}where \begin{equation*} \xi\in \mathcal{D}_{\varphi}=\{ \phi \mid \phi :I_{\mbox{\tiny$\phi$}}\subset I\rightarrow \R \  \text{is differentiable,}\  \ \ 1>\cos^2\phi+(\int\varphi(\cos\phi)' ds)^2\ \text{and}\   (\cos\phi)' <\ 0 \}.\end{equation*}
 coincides with the classic expression known from the general helix.
\\In effect, from the curvature \ref{curvatura de la helice} of $\alpha$  it can be deduced that
\begin{equation*}
\int{\kappa_{\alpha}ds}=-\frac{\arcsin{(\sqrt{1+\varphi^2_0}\cos\xi)}}{\sqrt{1+\varphi^2_0}},
\end{equation*}so we have, $-\frac{\pi}{2}\leq \sqrt{1+\varphi^2_0}\int{\kappa_{\alpha}ds}\leq \frac{\pi}{2}$ and \begin{eqnarray*}\sin(\sqrt{1+\varphi^2_0}\int{\kappa_{\alpha}ds})&=&-\sqrt{1+\varphi^2_0}\cos\xi\\
\cos(\sqrt{1+\varphi^2_0}\int{\kappa_{\alpha}ds})&=&\sqrt{1-(1+\varphi^2_0)\cos^2\xi},\end{eqnarray*}
then we have the classic expression of the general helix, given by
\begin{equation*}
\alpha(s)=(\frac{1}{\sqrt{(1+\varphi^2_0}}\int{\cos(\sqrt{1+\varphi^2_0}\int{\kappa_{\alpha}ds})}ds,\frac{\mid\varphi_0 \mid s}{\sqrt{1+\varphi_0^2}},\frac{-1}{\sqrt{(1+\varphi^2_0}}\int{\sin(\sqrt{1+\varphi^2_0}\int{\kappa_{\alpha}ds})}ds)
\end{equation*}
\end{remark}
Here is another example, that related to the curves denominated slant helix
\begin{theorem}
The curve $\alpha$ is a slant helixe if and only if
\begin{multline*}
\alpha(s)=(\int{\sqrt{\frac{1+m^2+m^2\varphi^2}{(1+m^2)(1+\varphi^2)}}}\cos{[\frac{\sqrt{1+m^2}\arctan{(\varphi})}{ m}}-\arctan{(\frac{ m\varphi}{\sqrt{1+m^2}})]}ds,\\\pm\int{\sqrt{\frac{1+m^2+m^2\varphi^2}{(1+m^2)(1+\varphi^2)}}}\sin{[\frac{\sqrt{1+m^2}\arctan{(\varphi)}}{ m}}-\arctan{(\frac{ m\varphi}{\sqrt{1+m^2}})]}ds,\\ \pm\int{\frac{\varphi}{\sqrt{(1+m^2)(1+\varphi^2)}}}ds),
\end{multline*}where $m$ is a constant, $m> 0$ and $\varphi=\varphi(s)$ is a differentiable function such that\\ $\varphi'(s)>0$.(respectively $m<0$ and $\varphi'(s)<0$). Any other slant helixe differs from $\alpha$ by a rigid motion.
\end{theorem}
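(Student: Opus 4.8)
The plan is to route everything through the geodesic curvature of the normal indicatrix computed in Theorem~\ref{representacion}(4). By definition a slant helix is a curve whose principal normal makes a constant angle with a fixed direction, and this is equivalent to the geodesic curvature of the spherical image of $\textbf{n}$ being constant, i.e. to
\begin{equation*}
\sigma=\frac{\varphi'}{(1+\varphi^2)^{3/2}\kappa}=\text{const},\qquad \varphi=\frac{\tau}{\kappa}.
\end{equation*}
I would call this constant $m$, so that the slant helix condition becomes $\kappa=\varphi'/\bigl(m(1+\varphi^2)^{3/2}\bigr)$. Since $\kappa>0$, this forces $\varphi'$ and $m$ to have the same sign, which is exactly the dichotomy $m>0,\varphi'>0$ versus $m<0,\varphi'<0$ in the statement.

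For the forward implication I would use Theorem~\ref{representacion} to produce $\phi\in\mathcal{D}_{\varphi}$ whose intrinsic representation curve $\rho_{\varphi}(\phi)$ has curvature equal to the given $\kappa$ and intrinsic fraction function equal to $\varphi$; then $\rho_{\varphi}(\phi)$ and $\alpha$ share curvature and torsion, so by the fundamental theorem of space curves they agree up to a rigid motion. The heart of the matter is identifying this $\phi$ explicitly. Writing $\Phi=\int\varphi\phi'\sin\phi\, ds$, the curvature formula of Theorem~\ref{representacion} reads $\kappa=\phi'\sin\phi/\sqrt{\sin^2\phi-\Phi^2}$, and I would verify the ansatz $\cos\phi=\pm\varphi/\sqrt{(1+m^2)(1+\varphi^2)}$ (the sign chosen to meet the domain requirement $(\cos\phi)'<0$), equivalently $\Phi=\mp 1/\sqrt{(1+m^2)(1+\varphi^2)}$, using $(\cos\phi)'=\mp\frac{1}{\sqrt{1+m^2}}\frac{\varphi'}{(1+\varphi^2)^{3/2}}$ together with $\Phi'=\varphi\phi'\sin\phi$. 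The decisive simplification is that with this choice
\begin{equation*}
\sin^2\phi-\Phi^2=\frac{m^2}{1+m^2}
\end{equation*}
is constant, which simultaneously confirms the curvature identity and places $\phi$ in $\mathcal{D}_{\varphi}$.

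Once $\phi$ is pinned down the computation of $\rho_{\varphi}(\phi)$ is just the evaluation of three integrals. The radial factor is $\sin\phi=\sqrt{(1+m^2+m^2\varphi^2)/((1+m^2)(1+\varphi^2))}$, which is precisely the square root in the first two coordinates, and $\cos\phi$ gives the third coordinate at once. For the azimuthal angle $\theta=\int \phi'\csc\phi\,\Phi/\sqrt{\sin^2\phi-\Phi^2}\, ds$, the substitution $\varphi'\,ds=d\varphi$ together with the constancy of $\sqrt{\sin^2\phi-\Phi^2}$ collapses it to a multiple of $\int d\varphi/\bigl((1+\varphi^2)(1+m^2+m^2\varphi^2)\bigr)$; a partial fraction decomposition (the two quadratic factors differ by the constant $1/m^2$) integrates to the two arctangent terms $\frac{\sqrt{1+m^2}}{m}\arctan\varphi-\arctan\frac{m\varphi}{\sqrt{1+m^2}}$. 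Substituting $\sin\phi$, $\cos\theta$, $\sin\theta$, $\cos\phi$ into $\rho_{\varphi}(\phi)(s)=(\int\sin\phi\cos\theta\, ds,\int\sin\phi\sin\theta\, ds,\int\cos\phi\, ds)$ then reproduces the displayed formula.

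The converse is a direct verification in the style of the general helix proof: starting from the explicit $\alpha$, I would compute $\kappa=\|\alpha'\wedge\alpha''\|/\|\alpha'\|^3$ and $\tau=(\alpha'\wedge\alpha''\cdot\alpha''')/\|\alpha'\wedge\alpha''\|^2$, read off $\varphi=\tau/\kappa$, and confirm that $\sigma=\varphi'/\bigl((1+\varphi^2)^{3/2}\kappa\bigr)=m$ is constant, so that $\alpha$ is a slant helix. I expect the main obstacle to be twofold. First, discovering and justifying the ansatz for $\cos\phi$ that renders $\sin^2\phi-\Phi^2$ constant: this is the single fact that turns the otherwise intractable integral defining $\theta$ into an elementary one. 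Second, the sign and branch bookkeeping, namely tracking the $\pm$ choices, the domain condition $(\cos\phi)'<0$, and the orientation so that the $m>0,\varphi'>0$ and $m<0,\varphi'<0$ cases are matched correctly and the identification up to a rigid motion is unambiguous.
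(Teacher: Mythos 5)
Your proposal is correct and follows essentially the same route as the paper: the Izumiya--Takeuchi characterization via constant geodesic curvature of the normal indicatrix, the same ansatz $\cos\phi=\mp\varphi/\sqrt{(1+m^2)(1+\varphi^2)}$ (sign tied to the sign of $m$), the same key identity $\sin^2\phi-(\int\varphi\phi'\sin\phi\,ds)^2=m^2/(1+m^2)$, the same reduction of the azimuthal integral to the two arctangent terms, and the same direct verification for the converse. The only cosmetic difference is that you phrase the matching of $\alpha$ with $\rho_{\varphi}(\phi)$ through curvature plus intrinsic fraction function, while the paper matches the geodesic curvature first and then reads off $\kappa_\rho=\kappa_\alpha$ from the slant-helix equation; these are equivalent.
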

\begin{proof}
Suppose that the curve $\alpha:I\rightarrow \R^3$ parameterized by arc length $s$ is a general slant helix, and $\kappa_{\alpha}=\kappa_{\alpha}(s)$ is its curvature function and $\tau_{\alpha}=\tau_{\alpha}(s)$ is its torsion function. Then by Izumiya and Takeuchi´ theorem [4] we know that the geodesic curvature of the principal normal of the curve $\alpha$ is a constant function, this is
\begin{equation*}
\frac{\kappa^2_{\alpha}}{(\kappa^2_{\alpha}+\tau^2_{\alpha})^{3/2}}(\frac{\tau_{\alpha}}{\kappa_{\alpha}})'(s)=m
\end{equation*}
 Defining  $\varphi(s)=\frac{\tau_{\alpha}(s)}{\kappa_{\alpha}(s)},$ if $m>0$, then we have $\varphi'(s)>0$. Now, by the theorem \ref{representacion}  we can find $\xi$ such that the geodesic curvature of the normal vector $n$ of the curve $\rho_{\varphi}(\xi)$ in $S^2$  coincides with the geodesic curvature of the normal vector $n$ of the curve $\alpha$ in $S^2$. In fact, consider the geodesic curvature of the normal vector of curve $\rho_{\varphi}(\phi)$, then
\begin{equation*}
\sigma=\frac{\varphi'\sqrt{\sin^2\phi-(\int\varphi\phi'\sin\phi ds)^2}}{(1+\varphi^2)^{3/2}\phi' \sin  \phi}=m,
\end{equation*}
it implies
\begin{equation*}
\frac{\varphi'}{(1+\varphi^2)^{3/2}}=m\frac{\phi' \sin  \phi}{\sqrt{\sin^2\phi-(\int\varphi\phi'\sin\phi ds)^2}}.
\end{equation*}
The function $\xi$ defined by $\cos\xi=\frac{-\varphi}{\sqrt{(1+m^2)(1+\varphi^2)}}$ satisfies the above equation, since
\begin{eqnarray*}
\xi'\sin \xi&=&\frac{\varphi'}{\sqrt{1+m^2}(1+\varphi^2)^{3/2}}\\
\sqrt{\sin^2\xi-(\int\varphi\xi'\sin\xi ds)^2}&=&\frac{m}{\sqrt{1+m^2}},
\end{eqnarray*}and as
\begin{equation*} \xi \in\mathcal{D}_{\varphi}=\{ \phi \mid \phi :I_{\mbox{\tiny$\phi$}}\subset I\rightarrow \R \  \text{is differentiable,}\  \ \ 1>\cos^2\phi+(\int\varphi(\cos\phi)' ds)^2\ \text{and}\   (\cos\phi)' <\ 0 \},\end{equation*}
 then we have
\begin{multline*}
   \rho_{\varphi}(\xi)(s)=(\int{\sin \xi \cos (\int{\frac{\xi'\csc\xi(\int{\varphi\xi'\sin\xi ds)}}{\sqrt{\sin^2\xi-(\int\varphi\xi'\sin\xi ds)^2}}}}ds)ds ,\\ \int{\sin \xi \sin (\int{\frac{\xi'\csc\xi(\int{\varphi\xi'\sin\xi ds)}}{\sqrt{\sin^2\xi-(\int\varphi\xi'\sin\xi ds)^2}}}}ds)ds ,\int{\cos \xi ds})\\=(\int{\sqrt{\frac{1+m^2+m^2\varphi^2}{(1+m^2)(1+\varphi^2)}}}\cos{[\frac{\sqrt{1+m^2}}{m}\int{\frac{\varphi'}{(1+m^2+m^2\varphi^2)(1+\varphi^2)}}ds]}ds,
   \\-\int{\sqrt{\frac{1+m^2+m^2\varphi^2}{(1+m^2)(1+\varphi^2)}}}\sin{[\frac{\sqrt{1+m^2}}{m}\int{\frac{\varphi'}{(1+m^2+m^2\varphi^2)(1+\varphi^2)}}ds]}ds,\\ -\int{\frac{\varphi}{\sqrt{(1+m^2)(1+\varphi^2)}}}ds) \\=(\int{\sqrt{\frac{1+m^2+m^2\varphi^2}{(1+m^2)(1+\varphi^2)}}}\cos{[\frac{\sqrt{1+m^2}\arctan{(\varphi})}{ m}}-\arctan{(\frac{ m\varphi}{\sqrt{1+m^2}})]}ds,\\-\int{\sqrt{\frac{1+m^2+m^2\varphi^2}{(1+m^2)(1+\varphi^2)}}}\sin{[\frac{\sqrt{1+m^2}\arctan{(\varphi)}}{ m}}-\arctan{(\frac{ m\varphi}{\sqrt{1+m^2}})]}ds,\\ -\int{\frac{\varphi}{\sqrt{(1+m^2)(1+\varphi^2)}}}ds),
 \end{multline*}
 note that the curvature of $\rho_{\varphi}(\xi)$ is given by
 \begin{equation*}
 \kappa_{\rho}=\frac{\varphi'}{m(1+\varphi^2)^{3/2}},
 \end{equation*}now as \begin{equation*}
\frac{\kappa^2_{\alpha}}{(\kappa^2_{\alpha}+\tau^2_{\alpha})^{3/2}}(\frac{\tau_{\alpha}}{\kappa_{\alpha}})'(s)=
\frac{1}{\kappa_{\alpha}(1+\varphi^2)^{3/2}}(\varphi)'(s)=m,
\end{equation*} then we have to $\kappa_{\rho}=\kappa_{\alpha}$.
\\
Since the torsion of $\rho_{\varphi}(\xi)$ is
\begin{equation*}
\tau_{\rho}=\frac{\varphi\varphi'}{m(1+\varphi^2)^{3/2}},
\end{equation*}
then
$\frac{\tau_{\rho}}{\kappa_{\rho}}=\varphi=\frac{\tau_{\alpha}}{\kappa_{\alpha}}$, this is $\tau_{\rho}=\tau_{\alpha}$.
Hence $\alpha=\rho_{\varphi}(\xi)$ or $\alpha$ differs from $\rho_{\varphi}(\xi)$ by a rigid motion.
\\Considering the case $m<0$, the function $\xi$ is defined by $\cos\xi=\frac{\varphi}{\sqrt{(1+m^2)(1+\varphi^2)}}$, where $\varphi=\frac{\tau_{\alpha}}{\kappa_{\alpha}}$
and we arrive at the curves of the form
\begin{multline*}
   \rho_{\varphi}(\xi)(s)=(\int{\sqrt{\frac{1+m^2+m^2\varphi^2}{(1+m^2)(1+\varphi^2)}}}\cos{[\frac{\sqrt{1+m^2}\arctan{(\varphi})}{ m}}-\arctan{(\frac{ m\varphi}{\sqrt{1+m^2}})]}ds,\\\int{\sqrt{\frac{1+m^2+m^2\varphi^2}{(1+m^2)(1+\varphi^2)}}}\sin{[\frac{\sqrt{1+m^2}\arctan{(\varphi)}}{ m}}-\arctan{(\frac{ m\varphi}{\sqrt{1+m^2}})]}ds,\\ \int{\frac{\varphi}{\sqrt{(1+m^2)(1+\varphi^2)}}}ds),
 \end{multline*}
 whose curvature and torsion are: $\frac{\varphi'}{m(1+\varphi^2)^{3/2}}$ and $\frac{\varphi\varphi'}{m(1+\varphi^2)^{3/2}}$, respectively.
 And similarly it is concluded that $\alpha=\rho_{\varphi}(\xi)$ or $\alpha$ differs from $\rho_{\varphi}(\xi)$ by a rigid motion.
 Now, assume that
 \begin{multline*}
\alpha(s)=(\int{\sqrt{\frac{1+m^2+m^2\varphi^2}{(1+m^2)(1+\varphi^2)}}}\cos{[\frac{\sqrt{1+m^2}\arctan{(\varphi})}{ m}}-\arctan{(\frac{ m\varphi}{\sqrt{1+m^2}})]}ds,\\\pm\int{\sqrt{\frac{1+m^2+m^2\varphi^2}{(1+m^2)(1+\varphi^2)}}}\sin{[\frac{\sqrt{1+m^2}\arctan{(\varphi)}}{ m}}-\arctan{(\frac{ m\varphi}{\sqrt{1+m^2}})]}ds,\\ \pm\int{\frac{\varphi}{\sqrt{(1+m^2)(1+\varphi^2)}}}ds),
\end{multline*}
where $m$ is a constant, $m> 0$ and $\varphi=\varphi(s)$ is a differentiable function such that\\ $\varphi'(s)>0$.(respectively $m<0$ and $\varphi'(s)<0$).\\By a direct calculation, using the formulas of curvature \begin{equation*}\kappa=\frac{\mid\mid \beta'\wedge \beta'' \mid\mid}{\mid \mid \beta' \mid \mid^3}\end{equation*} and torsion \begin{equation*}\tau=\frac{\beta'\wedge \beta''\cdot \beta'''}{\mid\mid\beta'\wedge\beta''\mid \mid^2},\end{equation*}
we obtain that
\begin{eqnarray*}
 \kappa_{\alpha}&=&\frac{\varphi'}{m(1+\varphi^2)^{3/2}}\\
 \tau_{\alpha}&=&\frac{\varphi\varphi'}{m(1+\varphi^2)^{3/2}}
\end{eqnarray*} And computing the geodesic curvature of the normal vector $n$ of the curve $\alpha$ in $S^2$, we have
\begin{eqnarray*}
\frac{\kappa^2_{\alpha}}{(\kappa^2_{\alpha}+\tau^2_{\alpha})^{3/2}}(\frac{\tau_{\alpha}}{\kappa_{\alpha}})'&=1.&
\end{eqnarray*}
\end{proof}
\bibliographystyle{amsplain}

\end{document}
